\newtheorem{theorem}{Theorem}[section]
\newtheorem{corollary}[theorem]{Corollary}
\theoremstyle{definition}
\theoremstyle{remark}
\numberwithin{equation}{section}
\begin{document}
\title{A characterization of inner product spaces}

\author[M.S. Moslehian, J.M. Rassias]{Mohammad Sal Moslehian$^1$ and John M. Rassias$^2$}

\address{$^1$ Department of Pure Mathematics, Ferdowsi University of Mashhad, P. O. Box
1159, Mashhad 91775, Iran.} \email{moslehian@ferdowsi.um.ac.ir and
moslehian@ams.org}
\urladdr{\url{http://profsite.um.ac.ir/~moslehian/}}

\address{$^{2}$ Pedagogical Department, National and Capodistrian University of
Athens, Section of Mathematics and Informatics, 4, Agamemnonos str.,
Aghia Paraskevi, Attikis 15342, Athens, Greece.}
\email{jrassias@primedu.uoa.gr and Ioannis.Rassias@primedu.uoa.gr}
\urladdr{\url{http://www.primedu.uoa.gr/~jrassias/}}

\subjclass[2010]{Primary 46C15; Secondary 46B20, 46C05.}

\keywords{inner product space; Euler--Lagrange identity; Day's
condition; normed space; characterization; operator.}

\begin{abstract}
In this paper we present a new criterion on characterization of real
inner product spaces. We conclude that a real normed space $(X, \|\cdot\|)$ is an inner product space if
$$\sum_{\varepsilon_i \in \{-1,1\}} \left\|x_1 +
\sum_{i=2}^k\varepsilon_ix_i\right\|^2=\sum_{\varepsilon_i \in
\{-1,1\}} \left(\|x_1\| +
\sum_{i=2}^k\varepsilon_i\|x_i\|\right)^2\,,$$
for some positive integer $k\geq 2$ and all $x_1, \ldots, x_k \in X$. Conversely, if $(X, \|\cdot\|)$ is an inner product space, then the equality above holds for all $k\geq 2$ and all $x_1, \ldots, x_k \in X$
\end{abstract}

\maketitle


\section{Introduction}

There are a lot of significant natural geometric properties, which
fail in general normed spaces as non Euclidean spaces. Some of these
interesting properties hold just when the space is an inner product
space. This is the most important motivation for study of
characterizations of inner product spaces.

\noindent The first norm characterization of inner product spaces
was given by Fr\'echet \cite{FRE} in 1935. He proved that a normed
space $(X, \|\cdot\|)$ is an inner product space if and only if
$$\|x+y+z\|^2+\|x\|^2+\|y\|^2+\|z\|^2-\|x+y\|^2-\|y+z\|^2-\|x+z\|^2=0$$
for all $x, y, z \in X$. In 1936 Jordan and von Neumann \cite{J-V}
showed that a normed space $X$ is an inner product space if and only
if the parallelogram law $\| x-y\|^2+\| x+y\|\sp 2=2\| x\|^2+2\|
y\|^2$ holds for all $x, y \in X$. Later, Day \cite{DAY} showed that
a normed linear space $X$ is an inner product space if one requires
only that the parallelogram equality holds for $x$ and $y$ on the
unit sphere. In other words, he showed that the parallelogram
equality may be replaced by the condition $R=4$ ($\| x\|=1$, $\|
y\|=1$), where $R=\| x-y\|^2+\| x+y\|^2$. There are several
characterizations of inner product spaces introduced by many
mathematicians some of which are [1--16].

In this paper we present a new criterion on characterization of
inner product spaces and give an operator version of it. The notion of inner product space plays an essential role in quantum mechanics, since every physical system is associated with a Hilbert space and self-adjoint operators associated to a system represent physical quantities; see \cite{ZET}.

\section{Main result}
\begin{theorem}
Let $(X, \|\cdot\|)$ be a real normed space, $n$ be a positive real
number and $k\geq 2$ be a positive integer. If
$$R_{k,n}=\sum_{\varepsilon_i \in \{-1,1\}} \left\|x_1 +
\sum_{i=2}^k\varepsilon_ix_i\right\|^n$$ and
$$A_{k,n}=\sum_{\varepsilon_i \in \{-1,1\}} \left(\|x_1\| + \sum_{i=2}^k\varepsilon_i\|x_i\|\right)^n\,,$$
then a necessary and sufficient condition for that the norm $\|\cdot\|$ over $X$ is induced by an inner product is that\\
(I) $R_{k,n} \leq A_{k,n}$ if $n \geq 2$\\
and\\
(II) $R_{k,n} \geq A_{k,n}$ if $0 < n \leq 2$\\
for any $x_1, \ldots, x_k \in X$.
\end{theorem}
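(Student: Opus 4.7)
I would first substitute $x_3=\cdots=x_k=0$ into the hypothesis. Each of the $2^{k-2}$ choices of $\varepsilon_3,\ldots,\varepsilon_k$ then contributes an identical summand, yielding
\[R_{k,n}=2^{k-2}\bigl(\|x_1+x_2\|^n+\|x_1-x_2\|^n\bigr),\quad A_{k,n}=2^{k-2}\bigl((\|x_1\|+\|x_2\|)^n+|\|x_1\|-\|x_2\||^n\bigr),\]
so the general-$k$ condition contains the $k=2$ condition and for sufficiency it suffices to prove the theorem when $k=2$. At $n=2$ both (I) and (II) apply simultaneously (since $n=2$ lies in both ranges) and immediately give the parallelogram law $\|x+y\|^2+\|x-y\|^2=2(\|x\|^2+\|y\|^2)$; the Jordan--von Neumann theorem then delivers the inner product.

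\textbf{Necessity.} If $X$ is an inner product space, iterating the parallelogram law produces the common value
\[\sum_\varepsilon\Bigl\|x_1+\sum_{i=2}^k\varepsilon_ix_i\Bigr\|^2 \;=\; 2^{k-1}\sum_{i=1}^k\|x_i\|^2 \;=\; \sum_\varepsilon\Bigl(\|x_1\|+\sum_{i=2}^k\varepsilon_i\|x_i\|\Bigr)^2,\]
so the two multisets of $2^{k-1}$ non-negative numbers share a common sum of squares. At $k=2$ the triangle inequality $\|x\pm y\|\le\|x\|+\|y\|$ forces $\{(\|x\|+\|y\|)^2,(\|x\|-\|y\|)^2\}$ to majorize $\{\|x+y\|^2,\|x-y\|^2\}$, and Karamata's inequality with the convex (respectively concave) function $\phi(t)=t^{n/2}$ delivers (I) for $n\ge2$ (respectively (II) for $0<n\le2$). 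General $k$ then follows by induction on $k$: peel off $x_k$ from $R_{k,n}$, apply the $k=2$ case to each pair $(z_\varepsilon,x_k)$ with $z_\varepsilon:=x_1+\sum_{i=2}^{k-1}\varepsilon_ix_i$, and close by feeding the convex function $t\mapsto(t+\|x_k\|)^n+|t-\|x_k\||^n$ into the $(k-1)$-step inductive hypothesis.

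\textbf{Sufficiency for $n\ne2$ and main obstacle.} With $k=2$ and $n\ne2$, the key is to apply the hypothesis a second time to the pair $\bigl((x+y)/2,(x-y)/2\bigr)$, whose sum and difference recover $x$ and $y$. After clearing the factor $2^n$ this produces the companion inequality
\[2^n\bigl(\|x\|^n+\|y\|^n\bigr)\;\le\;(\|x+y\|+\|x-y\|)^n+\bigl|\|x+y\|-\|x-y\|\bigr|^n\]
(same direction as (I); reversed under (II)). Expanding both inequalities, either binomially for positive integer $n$ or via the scaling $y\to ty$, $t\to0$ and matching the leading $t^2$-coefficients for general $n$, produces algebraic constraints on $\|x\|,\|y\|,\|x+y\|,\|x-y\|$ such as $\|x+y\|\cdot\|x-y\|\ge\bigl|\|x\|^2-\|y\|^2\bigr|$ (the case $n=4$), which combine with the original hypothesis to force the parallelogram identity; Jordan--von Neumann then concludes. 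The principal obstacle is precisely that the two one-sided inequalities pinch each other tightly only at $n=2$: for $n\ne2$ the parallelogram identity must be coaxed out of a non-tight squeeze, and this is where the scaling/Taylor-expansion argument, plus a careful comparison of second-order terms in a local Cauchy--Schwarz-type estimate, forms the technical heart of the proof.
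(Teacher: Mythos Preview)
Your reduction of the sufficiency direction to $k=2$ (by setting $x_3=\cdots=x_k=0$) is legitimate and in fact cleaner than the paper's route, which keeps $k$ general, specializes to unit vectors, and invokes a Day-type criterion ``$R_{k,2}\le k\,2^{k-1}$ on the unit sphere characterizes inner product spaces.'' Both arguments ultimately land at $n=2$: the paper simply appeals to ``continuity of $n\mapsto\|\cdot\|^n$'' to reach $R_{k,2}\le A_{k,2}$, and neither the paper nor you carries out a complete sufficiency argument for a single fixed $n\ne 2$. So your sketched $n\ne 2$ programme (the companion inequality via $(x+y)/2,\,(x-y)/2$ and a Taylor/scaling comparison) is more ambitious than what the paper actually proves, and its incompleteness is not a defect relative to the paper.

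For the necessity at $k=2$, your majorization/Karamata argument is correct and more transparent than the paper's method. The paper instead fixes the norms $\|x_i\|$, writes $\langle x_i,x_j\rangle=\|x_i\|\,\|x_j\|\cos p_{i,j}$, and treats $R_{k,n}$ as a smooth function of the independent parameters $p_{i,j}$; a second-partial-derivative test shows that the critical points $p_{i,j}\in\pi\mathbb Z$ are local maxima when $n>2$ and local minima when $0<n<2$, with extremal value $A_{k,n}$.

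The genuine gap in your proposal is the inductive step for necessity when $k\ge 3$. The $(k{-}1)$-step hypothesis is only the single inequality $\sum_\varepsilon\|z_\varepsilon\|^n\le\sum_\varepsilon|w_\varepsilon|^n$ for this particular exponent; it does not entitle you to ``feed in'' another convex function such as $t\mapsto(t+\|x_k\|)^n+\bigl|t-\|x_k\|\bigr|^n$. That step would follow if the multiset $\{w_\varepsilon^2\}$ majorized $\{\|z_\varepsilon\|^2\}$, but this fails already for $k=3$: take $x_1=(1,0)$, $x_2=(0,1)$, $x_3=(1,1)/\sqrt2$ in $\mathbb R^2$. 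The squared-norm multisets are $\{3+2\sqrt2,\,3,\,3,\,3-2\sqrt2\}$ and $\{9,1,1,1\}$; both sum to $12$, yet the top three partial sums are $9+2\sqrt2>11$, so majorization breaks (for instance the convex $\phi(t)=(t-1)_+$ gives $\sum\phi(\|z_\varepsilon\|^2)=6+2\sqrt2>8=\sum\phi(w_\varepsilon^2)$). Hence your induction cannot close, and it is precisely the paper's simultaneous optimization over all the angle parameters $p_{i,j}$ that handles general $k$ in one stroke.
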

\begin{proof}

\textbf{Necessity.}\\
Assume that the norm $\|\cdot\|$ on $X$ is induced by an inner
product $\langle\cdot,\cdot\rangle$. Hence $\|x\|^2=\langle x,
x\rangle\,\,(x \in X)$. We have
\begin{eqnarray*}
R_{k,n}&=&\sum_{\varepsilon_i \in \{-1,1\}} \left\|x_1 + \sum_{i=2}^k\varepsilon_ix_i\right\|^n\\
&=& \frac{1}{2}\sum_{\varepsilon_i \in \{-1,1\}} \left(\left\|\sum_{i=1}^k\varepsilon_ix_i\right\|^2\right)^{n/2}\\
&=& \frac{1}{2}\sum_{\varepsilon_i, \varepsilon_j \in \{-1,1\}} \left(\sum_{i=1}^k \|x_i\|^2 + 2\sum_{1 \leq i < j \leq k}\varepsilon_i \varepsilon_j \langle x_i, x_j\rangle\right)^{n/2}\\
&=& \frac{1}{2}\sum_{\varepsilon_i, \varepsilon_j \in \{-1,1\}} \left(a + \sum_{1 \leq i < j \leq k}\varepsilon_{i}
\varepsilon_{j} a_{i,j} \cos(p_{i,j})\right)^{n/2}\\
&=& R_{k,n}(P)\,,\\
\end{eqnarray*}
where $a:=\sum_{i=1}^k \|x_i\|^2,  a_{i,j}:=2\|x_i\|\,\|x_j\|$ and
$p_{i,j}$'s are defined in such a way that $\langle x_i, x_j\rangle
= \|x_i\|\,\|x_j\|\,\cos(p_{i,j})$. Let $P$ denote the
$\frac{k^2-k}{2}$-tuple consisting of $p_{i,j}\,\,(1 \leq i < j \leq
k)$ by going row-by-row throughout the matrix
\begin{eqnarray*}
\left[\begin{array}{ccccc}
\star& \langle x_1,x_2\rangle& \langle x_1,x_3\rangle& \cdots &  \langle x_1,x_k\rangle\\
\star& \star& \langle x_2,x_3\rangle& \cdots &  \langle x_2,x_k\rangle\\
\vdots&\vdots&\vdots&\vdots& \vdots\\
\star& \star& \cdots & \star& \langle x_{k-1},x_k\rangle\\
\star& \star& \cdots & \star& \star
\end{array}\right]\,.
\end{eqnarray*}
For each fixed $1 \leq t < s \leq k$, we have
\begin{eqnarray*}
\frac{\partial R_{k,n}(P)}{\partial p_{t,s}}&=&
\frac{1}{2}\sum_{\varepsilon_i, \varepsilon_j \in \{-1,1\}}\left[ -
\frac{n}{2}\left(a
+ \sum_{1 \leq i < j \leq k}\varepsilon_{i} \varepsilon_{j} a_{i,j} \cos(p_{i,j})\right)^{\frac{n-2}{2}} \varepsilon_t\varepsilon_s a_{t,s} \sin(p_{t,s})\right]\\
&=& \frac{n}{4} \varphi(P)a_{t,s}\sin(p_{t,s})\,,
\end{eqnarray*}
in which $$\varphi(P):= \sum_{\varepsilon_i, \varepsilon_j \in
\{-1,1\}}- \varepsilon_t \varepsilon_s\left(a + \sum_{1 \leq i < j
\leq k}\varepsilon_{i} \varepsilon_{j} a_{i,j}
\cos(p_{i,j})\right)^{\frac{n-2}{2}}\,.$$ The solution of the system
of equations $\frac{\partial R_{k,n}(P)}{\partial p_{t,s}}$ where
$t,s$ run throughout $1 \leq t < s \leq k$ is $P_0=(K_1\pi, \cdots,
K_{\frac{k^2-k}{2}}\pi)$, where $K_1, \ldots, K_{\frac{k^2-k}{2}}
\in \{0, \pm1, \pm2, \ldots\}$. We use the second partial test to
show that $P_0$ is an extremum point of $R_{k,n}(P)$. For $(u,v)
\neq (t,s)$, we have
\begin{eqnarray}\label{1}
\frac{\partial^2 R_{k,n}(P)}{\partial p_{u,v}\partial p_{t,s}}&=& \frac{\partial}{\partial p_{u,v}}\left(\frac{n}{4} \varphi(P)a_{t,s}\sin(p_{t,s}) \right)\nonumber\\
&=&\frac{n}{4}a_{t,s}\sin(p_{t,s})\frac{\partial}{\partial p_{u,v}}\varphi(P)
\end{eqnarray}
and
\begin{eqnarray}\label{2}
\frac{\partial^2 R_{k,n}(P)}{\partial p_{t,s}^2}&=& \frac{\partial}{\partial p_{t,s}}\left(\frac{n}{4} \varphi(P)a_{t,s}\sin(p_{t,s}) \right)\nonumber\\
&=&\frac{n}{4}a_{t,s}\sin(p_{t,s})\frac{\partial}{\partial p_{t,s}}\varphi(P) + \frac{n}{4} \varphi(P)a_{t,s}\cos(p_{t,s})\,.
\end{eqnarray}
It follows from \eqref{1} that
$$\frac{\partial^2 R_{k,n}}{\partial p_{u,v}\partial p_{t,s}}(P_0)=0$$
and from \eqref{2} that
\begin{eqnarray*}
\frac{\partial^2 R_{k,n}}{\partial p_{t,s}^2}(P_0)&=& \frac{n}{4}a_{t,s}\varphi(P_0)\\
&=&\frac{n}{4}a_{t,s} \sum_{\varepsilon_i, \varepsilon_j \in \{-1,1\}}- \varepsilon_t \varepsilon_s\left(a + \sum_{1 \leq i < j \leq k}\varepsilon_{i} \varepsilon_{j} a_{i,j}\right)^{\frac{n-2}{2}}\\
&:=& \gamma_{t,s}\,.
\end{eqnarray*}
We also consider the determinants
\begin{eqnarray*}
D_1(P_0)&:=&\frac{\partial^2 R_{k,n}}{\partial p_{1,2}^2}(P_0)=\gamma_{1,2}\\
D_2(P_0)&:=&\left|\begin{array}{cc}\frac{\partial^2
R_{k,n}}{\partial p_{1,2}^2}(P_0)&\frac{\partial^2 R_{k,n}}{\partial
p_{1,2}\partial p_{1,3}}(P_0)\\\frac{\partial^2 R_{k,n}}{\partial
p_{1,3}\partial p_{1,2}}(P_0)&
\frac{\partial^2 R_{k,n}}{\partial p_{1,3}^2}(P_0)\end{array}\right|=\left|\begin{array}{cc}\gamma_{1,2}&0\\0&\gamma_{1,3}\end{array}\right|=\gamma_{1,2}\gamma_{1,3}\\
\vdots\\
D_{\frac{k^2-k}{2}}(P_0)&:=&\left|\begin{array}{cccc}\frac{\partial^2 R_{k,n}}{\partial p_{1,2}^2}(P_0)&\frac{\partial^2 R_{k,n}}{\partial p_{1,2}\partial p_{1,3}}(P_0)& \cdots& \frac{\partial^2 R_{k,n}}{\partial p_{1,2}\partial p_{k-1,k}}(P_0)\\
\frac{\partial^2 R_{k,n}}{\partial p_{1,3}\partial p_{1,2}}(P_0)&\frac{\partial^2 R_{k,n}}{\partial p_{1,3}^2}(P_0)& \cdots &\frac{\partial^2 R_{k,n}}{\partial p_{1,3}\partial p_{k-1,k}}(P_0)\\
\vdots&\vdots&\vdots&\vdots\\
\frac{\partial^2 R_{k,n}}{\partial p_{k-1,k}\partial p_{1,2}}(P_0)&\frac{\partial^2 R_{k,n}}{\partial p_{k-1,k}\partial p_{1,3}}(P_0)& \cdots& \frac{\partial^2 R_{k,n}}{\partial p_{k-1,k}^2}(P_0)\end{array}\right|\\
&=&\left|\begin{array}{cccc}\gamma_{1,2}&0&0&0\\0&\gamma_{1,3}&0&0\\ \vdots&\vdots&\vdots&\vdots\\0&0&0&\gamma_{k-1,k}
\end{array}\right|=\gamma_{1,2}\gamma_{1,3} \cdots \gamma_{k-1,k}\,.
\end{eqnarray*}
It is not hard to see that for each $t,s$, $\gamma_{t,s} <0$ if
$n>2$ and $\gamma_{t,s}>0$ if $0<n <2$. Hence $(-1)^iD_i(P_0)
>0\,\, (i=1, 2, \cdots, (k^2-k)/2)$ for $n>2$, whence, by utilizing
the second partial test, we infer that
\begin{eqnarray*}
\max_P R_{k,n}(P)&=&\max_{P_0} R_{k,n}(P_0)\\
&=&\frac{1}{2}\sum_{\varepsilon_i, \varepsilon_j \in \{-1,1\}} \left(2\sum_{1 \leq i, j \leq k}\varepsilon_{i} \varepsilon_{j} \|x_i\|\,\|x_j\|\right)^{n/2}\\
&=&\frac{1}{2}\sum_{\varepsilon_i \in \{-1,1\}} \left[\left( \sum_{i=1}^k\varepsilon_{i} \|x_i\|\right)^2\right]^{n/2}\\
&=& \frac{1}{2}\sum_{\varepsilon_i \in \{-1,1\}} \left( \sum_{i=1}^k\varepsilon_{i} \|x_i\|\right)^n\\
&=&\sum_{\varepsilon_i \in \{-1,1\}} \left(\|x_1\| + \sum_{i=2}^k\varepsilon_{i} \|x_i\|\right)^n\\
&=&A_{k,n}\,,
\end{eqnarray*}
which yields (I). Similarly, $D_i(P_0) >0\,\, (i=1, 2, \cdots,
(k^2-k)/2)$ for $0<n<2$, whence, by utilizing the second partial
test, we deduce that
$$\min_P R_{k,n}(P)=\min_{P_0} R_{k,n}(P_0)=A_{k,n}\,,$$
which gives us (II).\\

\textbf{Sufficiency.} \\
Assume that condition (I) to be held. The continuity of the function $n \mapsto \| \cdot\|^n$ implies that
$$R_{k,2} \leq A_{k,2}=k2^{k-1}$$
for $\|x_1\|= \cdots=\|x_k\|=1$. From the pertinent sufficient condition of M.M. Day, it can be proved the following criterion \cite{DAY}:\\
``The necessary and sufficient condition for a norm defined over a
vector space $X$ to spring from an inner product is that $R_{k,2}
\leq k2^{k-1}$ where $k\geq 2$ is a positive integer and  $\|x_1\|=
\cdots=\|x_k\|=1$''. Due to this condition holds, we conclude that
the norm $\|\cdot\|$ on $X$ can be deduced from an inner product.

Similarly, if condition (II) holds, then we get  $$R_{k,2} \geq
A_{k,2}=k2^{k-1}$$ for $\|x_1\|= \cdots=\|x_k\|=1$. Applying the
same statement as the above criterion except that $R_{k,2} \geq
k2^{k-1}$, we conclude that the norm $\|\cdot\|$ on $X$ can be
deduced from an inner product.
\end{proof}

\begin{corollary}\label{cor}
A normed space $(X, \|\cdot\|)$ is an inner product space if
\begin{eqnarray}\label{referee}
\sum_{\varepsilon_i \in \{-1,1\}} \left\|x_1 +
\sum_{i=2}^k\varepsilon_ix_i\right\|^2=\sum_{\varepsilon_i \in
\{-1,1\}} \left(\|x_1\| +
\sum_{i=2}^k\varepsilon_i\|x_i\|\right)^2
\end{eqnarray}
for some $k\geq 2$ and all $x_1, \ldots, x_k \in X$. The converse is true if \eqref{referee} holds for all $k\geq 2$ and all $x_1, \ldots, x_k \in X$.
\end{corollary}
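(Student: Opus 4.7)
The plan is to derive the corollary as an immediate specialization of the preceding Theorem to the endpoint exponent $n=2$. Observe that the hypothesized equality is precisely the identity $R_{k,2}=A_{k,2}$ in the notation of the Theorem, i.e.\ both inequalities (I) and (II) hold simultaneously at the common endpoint $n=2$ of their respective ranges.

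For the forward direction, assume that the identity $R_{k,2}=A_{k,2}$ holds for some fixed $k\geq 2$ and all $x_1,\ldots,x_k\in X$. Then in particular $R_{k,2}\leq A_{k,2}$, which is condition (I) of the Theorem at $n=2$ (the left endpoint of the range $n\geq 2$). The sufficiency part of the Theorem immediately yields that $\|\cdot\|$ is induced by an inner product. (Equivalently, one could invoke the reverse inequality $R_{k,2}\geq A_{k,2}$ and appeal to condition (II) at $n=2$ to reach the same conclusion via Day's condition, as in the sufficiency argument of the Theorem.)

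For the converse, suppose that $(X,\|\cdot\|)$ is an inner product space and let $k\geq 2$ be arbitrary. Apply the necessity part of the Theorem with $n=2$: condition (I) gives $R_{k,2}\leq A_{k,2}$ and condition (II) gives $R_{k,2}\geq A_{k,2}$, and taken together these force the equality \eqref{referee} for all $x_1,\ldots,x_k\in X$. Alternatively, this identity can be verified directly by expanding $\|x_1+\sum_{i=2}^{k}\varepsilon_i x_i\|^2=\langle x_1+\sum_{i=2}^{k}\varepsilon_i x_i,\,x_1+\sum_{i=2}^{k}\varepsilon_i x_i\rangle$ and noting that, upon summing over $\varepsilon_i\in\{-1,1\}$, the cross terms $\varepsilon_i\varepsilon_j\langle x_i,x_j\rangle$ (with $2\leq i<j\leq k$) and $\varepsilon_i\langle x_1,x_i\rangle$ cancel in exactly the same way that the corresponding terms $\varepsilon_i\varepsilon_j\|x_i\|\|x_j\|$ and $\varepsilon_i\|x_1\|\|x_i\|$ cancel in the expansion of $(\|x_1\|+\sum_{i=2}^{k}\varepsilon_i\|x_i\|)^2$.

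I do not foresee any genuine obstacle here: the corollary is essentially a rephrasing of the Theorem at the critical exponent $n=2$, where the two one-sided inequalities collapse into an equality. The only point worth verifying carefully is that the sufficiency argument of the Theorem is valid at the endpoint $n=2$, but this is built into the argument there, since the continuity step used to pass from the general inequality to $R_{k,2}\lessgtr k2^{k-1}$ on the unit sphere proceeds through $n=2$ itself.
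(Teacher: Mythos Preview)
Your proposal is correct and matches the paper's approach: the paper states this corollary with no proof, treating it as an immediate consequence of the main Theorem at the exponent $n=2$, which is precisely what you do. Your additional direct verification of the converse by expanding the inner product is a harmless (and correct) supplement.
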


We can have an operator version of Corollary above. In fact a
straightforward computation shows that
\begin{corollary}\label{cor} Let $k \geq 2$ and $T_1, T_2, \ldots, T_k$
be bounded linear operators acting on a Hilbert space. Then
$$\sum_{\varepsilon_i \in \{-1,1\}} \left|T_1 +
\sum_{i=2}^k\varepsilon_iT_i\right|^2=2^{k-1}\sum_{i=1}^k|T_i|^2=\sum_{\varepsilon_i
\in \{-1,1\}} \left(|T_1| +
\sum_{i=2}^k\varepsilon_i|T_i|\right)^2\,,$$
where $|T|=(T^*T)^{1/2}$
denotes the absolute value of $T$.
\end{corollary}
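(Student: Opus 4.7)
The plan is to verify both equalities in the corollary by direct expansion of the operator squares and a single parity argument that annihilates all off-diagonal terms. Throughout, I will adopt the convention $\varepsilon_1 := 1$ so that $T_1 + \sum_{i=2}^{k}\varepsilon_i T_i$ can be written uniformly as $\sum_{i=1}^{k}\varepsilon_i T_i$, while $\varepsilon_2,\ldots,\varepsilon_k$ continue to range independently over $\{-1,1\}$, producing $2^{k-1}$ summands.

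For the left-hand equality, I would expand
$$\left|T_1 + \sum_{i=2}^k \varepsilon_i T_i\right|^2 = \Bigl(\sum_{i=1}^k \varepsilon_i T_i^*\Bigr)\Bigl(\sum_{j=1}^k \varepsilon_j T_j\Bigr) = \sum_{i,j=1}^k \varepsilon_i\varepsilon_j\, T_i^{*}T_j,$$
and then interchange the sums over $(i,j)$ and over the sign vector. The diagonal terms $i=j$ give $\varepsilon_i^2 T_i^{*}T_i = |T_i|^2$, and summing over the $2^{k-1}$ sign choices contributes $2^{k-1}\sum_{i=1}^k |T_i|^2$. For any off-diagonal pair $i\neq j$, at least one of the indices belongs to $\{2,\ldots,k\}$, so $\sum_{\varepsilon_2,\ldots,\varepsilon_k}\varepsilon_i\varepsilon_j = 0$ by factoring out that free sign; hence every off-diagonal contribution $\varepsilon_i\varepsilon_j(T_i^{*}T_j + T_j^{*}T_i)$ cancels and the first equality drops out.

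For the right-hand equality, I would run the same argument with the positive operators $A_i := |T_i|$ in place of $T_i$. Since $A_i = A_i^{*}$,
$$\Bigl(|T_1| + \sum_{i=2}^k \varepsilon_i |T_i|\Bigr)^{2} = \sum_{i,j=1}^k \varepsilon_i\varepsilon_j\, A_i A_j,$$
and the identical parity argument collapses this double sum to $2^{k-1}\sum_{i=1}^{k}A_i^{2} = 2^{k-1}\sum_{i=1}^{k}|T_i|^{2}$. There is essentially no obstacle beyond bookkeeping; the one subtlety worth stating clearly is the asymmetry between the fixed sign $\varepsilon_1 = 1$ and the summation variables $\varepsilon_2,\ldots,\varepsilon_k$, which is exactly what forces every off-diagonal sum $\sum\varepsilon_i\varepsilon_j$ to vanish and makes both sides collapse to the common middle expression $2^{k-1}\sum_{i=1}^{k}|T_i|^{2}$.
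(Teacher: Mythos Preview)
Your proposal is correct and is precisely the ``straightforward computation'' the paper alludes to without spelling out: expand $|\,\cdot\,|^2$ as a double sum, observe that diagonal terms contribute $2^{k-1}\sum_i|T_i|^2$, and kill all cross terms via $\sum_{\varepsilon}\varepsilon_i\varepsilon_j=0$ for $i\neq j$. There is no difference in approach to report.
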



\end{document}